\newcommand{\abar}{{\ensuremath{\bar{a}}}}
\newcommand{\bbar}{{\ensuremath{\bar{b}}}}
\newcommand{\xbar}{{\ensuremath{\bar{x}}}}
\newcommand{\ybar}{{\ensuremath{\bar{y}}}}
\newcommand{\zbar}{{\ensuremath{\bar{z}}}}
\newcommand{\mbar}{{\ensuremath{\bar{m}}}}
\DeclareMathOperator{\qftp}{qftp}  
\DeclareMathOperator{\acl}{acl}   
\DeclareMathOperator{\td}{td}  
\DeclareMathOperator{\loc}{Loc}   
\DeclareMathOperator{\ldim}{ldim}  
\DeclareMathOperator{\Mat}{Mat}  
\DeclareMathOperator{\ecl}{ecl} 
\newcommand{\N}{\ensuremath{\mathbb{N}}}
\newcommand{\Q}{\ensuremath{\mathbb{Q}}}
\newcommand{\Cexp}{\ensuremath{\mathbb{C}_{\mathrm{exp}}}}
\newcommand{\Loo}{\ensuremath{L_{\omega_1,\omega}}}
\newcommand{\ga}{\ensuremath{{\mathbb{G}_\mathrm{a}}}}   
\newcommand{\gm}{\ensuremath{{\mathbb{G}_\mathrm{m}}}}  
\renewcommand{\phi}{\varphi}
\renewcommand{\ge}{\ensuremath{\geqslant}}
\newcommand{\tuple}[1]{\ensuremath{\langle #1 \rangle}}
\newcommand{\class}[2]{\ensuremath{\left\{ #1 \,\left|\, #2 \right.\right\}}}
\newcommand{\iso}{\cong}
\newcommand{\subs}{\subseteq} 
\newcommand{\sups}{\supseteq} 
\DeclareMathOperator{\fin}{fin}  
\newcommand{\subsetfin}{\ensuremath{\subseteq_{\fin}}} 
\newcommand{\finsub}{\subsetfin}
\newcommand{\minus}{\ensuremath{\smallsetminus}}
\newcommand{\strong}{\ensuremath{\lhd}} 
\newcommand{\nstrong}{\ensuremath{\not\kern-4pt\lhd\;}} 
\newcommand{\gen}[1]{\ensuremath{\left\langle #1 \right\rangle}} 
\newcommand{\hull}[1]{\ensuremath{\lceil #1\rceil}}
\newcommand{\cross}{\ensuremath{\times}}
\newbox\noforkbox \newdimen\forklinewidth
\noforkbox\hbox{\lower 2pt\box1\lower
2pt\box0\relax}
\def\unionstick{\mathop{\copy\noforkbox}\limits}
\def\nonfork_#1{\unionstick_{\textstyle #1}}
\newbox\doesforkbox
\doesforkbox\hbox{\lower 2pt\box1 \lower
2pt\box2\lower2pt\box0\relax}
\def\nunionstick{\mathop{\copy\doesforkbox}\limits}
\def\fork_#1{\nunionstick_{\textstyle #1}}
\newcommand{\findep}[4]{\ensuremath{#1 \nonfork_{#3}^{#4} #2}}
\newcommand{\leteq}{\mathrel{\mathop:}=}
\newtheorem{prop}{Proposition}
\newtheorem{cor}[prop]{Corollary}
\newtheorem{theorem}[prop]{Theorem}
\newtheorem{lemma}[prop]{Lemma}
\theoremstyle{definition}
\newtheorem{notation}[prop]{Notation}
\newtheorem{remark}[prop]{Remark}
\newcommand{\G}{\mathbb{G}}
\newcommand{\EC}{\ensuremath{\mathcal{EC}^*_{\mathrm{st,ccp}}}}
\newcommand{\qmec}{quasiminimal excellent class}
\newcommand{\quot}[2]{\frac{#1}{#2}}
\newcommand{\alphabar}{{\bar{\alpha}}}
\newcommand{\betabar}{{\bar{\beta}}}
\newcommand{\gammabar}{{\bar{\gamma}}}
\title[Categoricity of Zilber's exponential fields]{Excellence and uncountable categoricity of Zilber's exponential fields}
\author{Martin Bays and Jonathan Kirby}
\date{version 1.0, \today}
\begin{document}

\begin{abstract}
We prove that Zilber's class of exponential fields is quasiminimal excellent and hence uncountably categorical, filling two gaps in Zilber's original proof.
\end{abstract}

\maketitle

\section{Introduction}

In  \cite{Zilber05peACF0}, Zilber defined his exponential fields to be models in a certain class \EC. The purpose of this note is to give a proof of the following categoricity theorem, filling gaps in the proof from that paper.
\begin{theorem}\label{categoricity}
 For each cardinal $\lambda$, there is exactly one model $B_\lambda$ in \EC\ of exponential transcendence degree $\lambda$, up to isomorphism. Furthermore, the cardinality of $B_\lambda$ is $\lambda + \aleph_0$, so \EC\ is categorical in all uncountable cardinals.
\end{theorem}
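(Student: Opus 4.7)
The plan is to prove categoricity by establishing that $\EC$ is a quasiminimal excellent class with respect to the pregeometry $\ecl$ of exponential-algebraic closure, and then invoking the abstract uncountable categoricity theorem for such classes (originally due to Zilber, and now available in more general form via the Bays--Hart--Hyttinen--Kes\"al\"a--Kirby framework). In any such class, the unique model of $\ecl$-dimension $\lambda$ has cardinality $\lambda + \aleph_0$ by the countable closure property, which gives the second sentence of the theorem for free.

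First I would recall the axioms of $\EC$ and check that $\ecl$ is a pregeometry on every $B \in \EC$ satisfying the countable closure property, so that exponential transcendence degree is well-defined as its dimension. Next I would verify the $\aleph_0$-homogeneity axioms: partial isomorphisms between finitely generated subsets of models of $\EC$ of the same $\ecl$-dimension extend, and isomorphisms between countable $\ecl$-closed substructures extend to isomorphisms of models of the same exponential transcendence degree. The inputs here are (a) the standard kernel, so all kernels are identified with $\Z\omega$, (b) the Schanuel property, by which quantifier-free types of tuples over $\ecl$-closed sets are controlled by the algebraic type plus transcendence degree, and (c) strong exponential-algebraic closedness, used to realise the required quantifier-free types. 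This is essentially the content of Zilber's "countable case" argument, and I would organise it as a standard back-and-forth.

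The main obstacle, and the principal gap that has to be filled, is the excellence axiom: given an independent system $(C_s)_{s \subsetneq n}$ of countable $\ecl$-closed submodels of a fixed ambient model, together with an isomorphism onto another such system, the isomorphism extends to an isomorphism of the closures $\ecl(\bigcup_s C_s)$. Concretely, one must show that the type of a tuple exp-algebraic over the amalgam $\bigcup_s C_s$ is controlled by its type over the component pieces, so that the exponential structure on the $n$-ary amalgam is rigid up to the system. The technical core is a Kummer-theoretic or "thumbtack"-type statement about the divisibility of exponential orbits over independent systems of algebraically closed fields; the original proof in \cite{Zilber05peACF0} overlooked the need to verify that the relevant Galois actions on such orbits remain as generic as possible after amalgamation, and this is the step I expect to require the most work. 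I would carry it out by reducing to an algebro-geometric statement about the Zariski closure of multiplicatively generated subgroups and combining it with the Schanuel property applied separately to each component $C_s$ to pin down the transcendence behaviour. The second gap lies closer to strong exponential-algebraic closedness, where the production of generic solutions to exponential-polynomial equations has to be compatible with the excellence bookkeeping; I would dispose of it immediately after excellence, so that both feed uniformly into the final back-and-forth concluding the theorem.
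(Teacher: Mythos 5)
Your high-level architecture matches the paper's: establish that $\EC$ is a quasiminimal excellent class for $\ecl$ and then invoke the abstract categoricity theorem. However, there is a genuine gap at the heart of your plan: you propose to prove the excellence axiom in its full generality, for an arbitrary independent system of $\ecl$-closed submodels. The authors explicitly state they were \emph{unable} to do this. Their key move is a reduction: they observe that the categoricity proof in \cite{OQMEC} only ever invokes excellence for crowns $C = \gen{\bigcup_i \ecl^B(X_i)}$ where $\bigcup_i X_i$ is the \emph{whole} exponential transcendence base $X$ and the common intersection $X_0 = \bigcap_i X_i$ is \emph{cofinite} in $X$, hence infinite. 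The restriction is load-bearing. The proof that the system $(A_s)$ is $\ACF$-independent (Lemma~\ref{field indep}) proceeds by conjugating the finite set $W_s = X \minus Z_s$ into $X_0$ by an automorphism fixing $\ecl^B(Z_0\cup Z_s)$ pointwise --- this requires room inside $X_0$, i.e.\ that $X_0$ is infinite. Without that, the independence needed to apply the corrected thumbtack lemma (Proposition~\ref{prop:thumbtack}, from \cite{BZ11}) is not available, and no amount of ``reducing to an algebro-geometric statement about Zariski closures of multiplicatively generated subgroups'' will conjure it.

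You have also mislocated the second hole. It is not ``closer to strong exponential-algebraic closedness'' or about excellence bookkeeping: it is in Axiom~II ($\aleph_0$-homogeneity over closed submodels $G$). Zilber's argument tacitly assumes the existence of a realisation of the relevant type that is \emph{generic} over $G$, which is unproblematic when $G$ is finitely generated but not immediate when $G$ is infinite-dimensional. The paper fills this by first descending to a finitely generated strong $G_0 \strong G$ using the $N=1$ case of the thumbtack lemma to arrange Kummer-genericity, then climbing back up along an $\ecl$-basis of $G$ over $G_0$. Finally, a device you omit but which is essential to the excellence step: the restricted crown hypothesis makes all models $\Loo$-equivalent, and the paper uses an $\Loo$-formula $\theta(\xbar)$ over $c_0$ to transfer $\Q$-linear independence of $\betabar$ from the finite base $c_0$ to all of $C$. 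Without this you cannot conclude that $(\betabar,e^\betabar)$ is generic in $V$ over $C$, and the Kummer-genericity argument stalls. In short, your plan reproduces the overall strategy but would run directly into the two obstructions the paper was written to circumvent.
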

In particular there is a unique model, $\mathbb{B}$, of cardinality $2^{\aleph_0}$ and so it makes sense to ask, as Zilber does, whether the complex exponential field $\Cexp$ is isomorphic to $\mathbb{B}$.

In another paper \cite{BHHKK} we prove a result about quasiminimal excellent classes which gives a way to avoid one of the holes in Zilber's proof: instead of proving the excellence axiom for this class we show it is redundant by proving it in general. However, we believe that the direct proof given here is of independent interest.

In section~2 of this paper we outline the definition of Zilber's exponential fields, and the concepts of strong extensions and partial exponential subfields. We then give the first-order language in which we work. Section~3 introduces quasiminimal excellent classes and, largely by reference to Zilber's work, shows that \EC\ satisfies all of the axioms except the excellence axiom. We also give some corollaries of these axioms which are used later. Finally, section~4 contains the proof of the excellence property and the categoricity theorem, along with the relevant definitions.

\section{Zilber's exponential fields}

\subsection{The axioms}

The full definition of the class \EC\ would take too much space to give, so below we give the list of axioms without full details of the last two. The reader is referred to Zilber's paper, to \cite{Marker06}, to \cite{FPEF} or to \cite{NAPE} where a discussion of the axioms is given. A structure $\tuple{F; +,\cdot, \exp}$ is in \EC\ if it satisfies the following five axioms.
\begin{description}
 \item[1. ELA-field] $F$ is an algebraically closed field of characteristic zero, and its exponential map $\exp$ is a surjective homomorphism from its additive group to its multiplicative group.

 \item[2. Standard kernel] the kernel of the exponential map is an infinite cyclic group generated by a transcendental element $\tau$.

 \item[3. Schanuel Property] The \emph{predimension function} 
\[\delta(\xbar) \leteq \td(\xbar, \exp(\xbar))- \ldim_\Q(\xbar)\]
satisfies $\delta(\xbar) \ge 0$ for all tuples $\xbar$ from $F$.

\item[4. Strong exponential-algebraic closedness] This axiom roughly states that every system of exponential polynomial equations with coefficients from $F$ has a solution in $F$, unless doing so would contradict the Schanuel property.

\item[5. Countable Closure Property] An exponential field $F$ has an associated pregeometry, \emph{exponential algebraic closure}, written $\ecl^F$. The dimension notion for this pregeometry is called \emph{exponential transcendence degree}. The last axiom states that for each finite subset $C$ of $F$, the exponential algebraic closure $\ecl^F(C)$ of $C$ in $F$ is countable. 
\end{description}

The paper \cite{FPEF} gives an explicit construction (Construction~6.4) of a countable model $B_\lambda$ of exponential transcendence degree $\lambda$, for each countable cardinal $\lambda$. In Corollary~6.10 of the same paper it is proved that they are the only countable models in \EC\ up to isomorphism. In particular there is a unique countable model $B_{\aleph_0}$ of exponential transcendence degree $\aleph_0$, which we will henceforth write just as $B$.

\subsection{Strong extensions}
If $A$ is a subset of $B$, we write $\gen{A}$ for the $\Q$-vector subspace of $B$ spanned by $A$. If $A_1 \subs A_2 \subs B$ we say that $A_1$ is \emph{strong} in $A_2$ and write $A_1 \strong A_2$ if for all finite tuples $\bbar \in A_2$ we have 
\[\delta(\bbar/A_1) \leteq \td(\bbar,\exp(\bbar),A_1,\exp(A_1)/A_1,\exp(A_1)) - \ldim_\Q(\bbar/A_1) \ge 0.\]

It follows from axiom 3, the Schanuel property, that given any subset $A$ of $B$, there is a smallest $\Q$-linear subspace $\hull{A}$ of $B$ which contains $A$ and is strong in $B$. Furthermore if $\abar$ is finite then $\hull{\abar}$ is a finite-dimensional $\Q$-vector space, and moreover if $A\strong B$ and $\abar$ is a finite tuple, then $\ldim_\Q(\hull{A\abar}/A)$ is finite. 

We can define the closure operator $\ecl$ by 
\[ \ecl(A) := \bigcup \{ \abar \in B \;|\; \delta(\abar / \hull{A}) = 0 \} .\]

\subsection{Partial exponential subfields}
We outline the necessary algebra of exponential fields. See section~3 of \cite{FPEF} for full details. Given $A \subs B$, the partial exponential subfield $F_A$ of $B$ generated by $A$ is the subfield of $B$ generated by $A \cup \exp(A)$, equipped with the restriction to $A$ of the exponential map of $B$.

Suppose $A \subs B$ and $\abar$ is a finite tuple. Then the isomorphism type of $F_{A\cup\abar}$ as an extension of $F_A$ is given by the sequence of varieties $(V_m)_{m \in \N^+}$ where $V_m \leteq \loc(\quot{\abar}{m},e^{\quot{\abar}{m}}/F_A)$ is the smallest subvariety of $\G_a^n\times\G_m^n$ defined over $F_A$ and containing $(\quot{\abar}{m},e^{\quot{\abar}{m}})$. 

The following proposition is Theorem~3 of \cite{BZ11}.
\begin{prop}\label{prop:thumbtack}
Suppose $F_A$ is a partial exponential subfield of $B$, and there is an algebraically independent subset $Y$ of $B$ and subsets $Y_1,\ldots,Y_N$ of $Y$ such that $F_A$ is the subfield of $B$ generated by $\bigcup_{i=1}^N \acl(Y_i)$. Then for any finite tuple $\abar$ from $B$, there is $m_0 \in \N^+$ such that the sequence of varieties $(V_m)_{m\in\N^+}$ as given above is determined by the single variety $V_{m_0}$.
\end{prop}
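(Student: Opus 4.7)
The varieties $V_m$ are determined up to isomorphism by the Galois orbit of $(\abar/m, e^{\abar/m})$ over $F_A$, so the proposition reduces to showing that the action of $\mathrm{Gal}(F_A^{\alg}/F_A)$ on the tower $\{(\abar/m, e^{\abar/m})\}_{m\in\N^+}$ stabilizes: for some $m_0$ and every multiple $m$ of $m_0$, the image of Galois in the fibre of the covering $(x,y) \mapsto (m\cdot x, y^m)$ above $(\abar, e^{\abar})$ is the maximal subgroup of $(\tau\Z/m\tau\Z)^n \times \mu_m^n$ compatible with all constraints already visible at depth $m_0$.

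The plan is a two-step Kummer analysis. On the additive side, division of $a_i$ by $m$ is only determined modulo $\tau\Z/m\tau\Z$, and the vectorial Kummer image is cut down by the $\Q$-linear relations among $\abar$ and elements of $F_A$. On the multiplicative side, extraction of $m$-th roots of $e^{a_i}$ produces a $\mu_m$-torsor whose Galois image is cut down by the multiplicative dependencies of $e^{\abar}$ modulo $F_A^\times$. The two sides are coupled by the exponential: an automorphism sending $a_i/m \mapsto a_i/m + k\tau/m$ must send $e^{a_i/m} \mapsto e^{a_i/m}\cdot\zeta_m^k$, where $\zeta_m = e^{\tau/m}$. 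Once the linear, multiplicative, and coupling constraints are recorded at depth $m_0$, the Galois image at every higher depth must be the maximal compatible subgroup, and thus $V_m$ is pinned down as a specific irreducible component of the preimage of $V_{m_0}$ under $(x,y)\mapsto ((m/m_0)\cdot x, y^{m/m_0})$.

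The crux is then to prove that beyond some $m_0$ no genuinely new relations appear. Since $\abar$ is finite, any such relation must already be witnessed in the compositum of finitely many of the $\acl(Y_i)$'s. A descent of Bashmakov--Ribet type then reduces the question to Kummer theory over a finitely generated subring of that compositum, where classical theorems guarantee that unexpected $m$-th roots and fractional multiples lie in a bounded extension.

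The main obstacle is exactly this uniform Kummer-genericity statement. Within any single algebraically closed $\acl(Y_i)$, Kummer theory is trivial; but the compositum of several such fields produces multiplicative relations whose complexity must be controlled uniformly in $m$. Bounding them by a single $m_0$ depending only on $\abar$ and finitely many $Y_i$'s is the technical heart of the ``thumbtack lemma'' of \cite{BZ11}, and is where I expect the real work to lie.
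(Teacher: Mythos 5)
The paper does not actually prove this proposition: the entire in-paper ``proof'' is the sentence preceding the statement, namely that this is Theorem~3 of \cite{BZ11}. So there is no internal argument to compare against. What you have written is a roadmap to that external theorem rather than a proof of the proposition. Your structural account is accurate as far as it goes --- the reduction to stabilization of the Galois image on the $\G_a^n\times\G_m^n$ tower, the separation into an additive part (division of $\abar$ modulo the kernel $\tau\Z$) and a multiplicative Kummer part (roots of $e^{\abar}$ modulo $F_A^{\times}$), the coupling of the two via $\zeta_m = e^{\tau/m}$, and a Bashmakov--Ribet style descent to a finitely generated subring --- and you correctly identify the role of the hypothesis that $F_A$ is generated by composita of $\acl(Y_i)$'s over a common algebraically independent base.

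However, your proposal stops exactly where you say it does: at the uniform Kummer-genericity statement over the compositum. That uniformity claim \emph{is} the thumbtack lemma; everything before it is routine setup, and deferring it to \cite{BZ11} means you have proved nothing the paper did not already import by citation. This is a genuine gap, which you have diagnosed correctly but not filled. To be clear about why it is nontrivial: over a single $\acl(Y_i)$ Kummer theory is indeed vacuous, but elements of the compositum can satisfy multiplicative relations that no single $\acl(Y_i)$ sees, and bounding the resulting drop in Kummer degrees by a single $m_0$ independent of $m$ requires the independence of the $Y_i$'s inside the common algebraically independent set $Y$ to be exploited carefully --- this is exactly what the corrected \cite{BZ11} proof does, and what the erroneous earlier version in \cite{Zilber06covers} failed to do. If the goal were a self-contained proof, that is the step that would have to be written out.
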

We say that the variety $V_{m_0}$ is \emph{Kummer-generic} over $F_A$ in this case.

\subsection{A different language}

The proof of categoricity goes via showing that the class \EC\ satisfies the axioms of a \qmec. These axioms are rather sensitive to the choice of language because they imply a form of quantifier elimination, so we have to add relations to the language in order to get this quantifier elimination. On the other hand, in order to prove the axioms we need the substructure generated by a finite set to be rather small, and in particular we need the exponential map to be given by a relation symbol rather than a function symbol. The axioms of \qmec es we use refer only to the countable models of \EC\ and these all embed in $B$, so it is enough to consider all our structures to be substructures of $B$.

The language we use is the language $\tuple{+,0, (\lambda\cdot)_{\lambda \in \Q}}$ of $\Q$-vector spaces, expanded by $r$-ary relations $R_{V,n}$ for each $n,r \in \N$ and each algebraic subvariety $V$ of $\ga^{r+n} \cross \gm^{r+n}$, defined and irreducible over $\Q$, where $R_{V,n}$ names the formula given by 
\[(\exists \ybar)(\forall \mbar \in \Q^{r+n}) \left[ (\xbar,\ybar,e^\xbar,e^\ybar) \in V \wedge \left[ \sum_{i=1}^r m_{i}x_i + \sum_{i=1}^n m_{i+r}y_i = 0 \to \bigwedge_{i=r+1}^{r+n} m_i = 0\right]\right].
\]

We call this language $L$. Note that by taking $r=3$, $n=0$ and $V$ to be given by $x_1x_2=x_3$ we get the graph of multiplication, and taking $r=2$, $n=0$ and $V$ to be given by $x_2 = x'_1$ we get the graph of exponentiation. Also, the subset of rationals $\Q$ is definable in the original language $\tuple{+,\cdot,\exp}$, since the integers can be defined as the additive stabiliser of the kernel of the exponential map. So $B$ regarded as an $L$-structure has the same first-order definable sets as $B$ regarded as a structure in the original language $\tuple{+,\cdot,\exp}$. Zilber uses a language $L^*$ which is very similar to our $L$, but which is not explicitly an expansion by first-order definable relations, only by \Loo-definable relations.  Note that the $L$-substructure of $B$ generated by a subset $A$ is just its \Q-linear span $\gen{A}$. Our language $L$ is closely related to strong partial exponential subfields as the next lemma shows.
\begin{lemma}\label{pE-subfields}
Suppose that $A \strong B$ and $\abar$, $\bbar$ are finite tuples such that $A\cup\abar \strong B$ and $A \cup \bbar \strong B$. Suppose also that there is an isomorphism $F_{A \cup\abar} \iso F_{A \cup \bbar}$ sending $\abar$ to $\bbar$ and fixing $F_A$ pointwise. Then $\qftp_L(\abar/A) = \qftp_L(\bbar/A)$.
\end{lemma}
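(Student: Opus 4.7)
The atomic $L$-formulas over $A$ satisfied by $\abar$ come in two kinds: $\Q$-linear identities among $\abar$ with parameters from $\gen{A}$, and instances $R_{V,n}(\cbar)$ of the named relations, where $\cbar$ is a tuple of $\Q$-linear combinations of $\abar$ with parameters from $A$. Linear identities transfer to $\bbar$ immediately: $\sigma$ is a $\Q$-linear field isomorphism $\gen{A\abar} \iso \gen{A\bbar}$ fixing $\gen{A}$ pointwise and sending $\abar \to \bbar$. By symmetry of the hypotheses it remains to show that if $R_{V,n}(\cbar)$ holds on the $\abar$-side then it holds on the $\bbar$-side; by enlarging $V$ to absorb the $\Q$-linear combinations into additional coordinates, we may assume $\cbar = (\abar, A')$ for some tuple $A' \subs A$.

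Suppose $R_{V,n}(\abar, A')$ is witnessed by $\ybar \in B^n$, so $(\abar, A', \ybar, e^{\abar}, e^{A'}, e^{\ybar}) \in V(B)$ and $\ybar$ is $\Q$-linearly independent modulo $\gen{A'\abar}$. The strategy is to produce $\ybar' \in B^n$ such that $\sigma$ extends to an isomorphism of partial exponential subfields $\sigma^+ : F_{A\cup\abar\cup\ybar} \iso F_{A\cup\bbar\cup\ybar'}$ with $\sigma^+(\ybar) = \ybar'$. Given such a $\sigma^+$, the tuple $\ybar'$ witnesses $R_{V,n}(\bbar, A')$: the variety condition is preserved because $V$ is defined over $\Q$ and $\sigma^+$ preserves the ambient field and exponential structure, while the $\Q$-linear independence of $\ybar'$ modulo $\gen{A'\bbar}$ is preserved by the $\Q$-linearity of $\sigma^+$.

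To build $\sigma^+$: the isomorphism type of $F_{A\cup\abar\cup\ybar}/F_{A\cup\abar}$ is determined by the irreducible locus $U \leteq \loc(\ybar, e^{\ybar}/F_{A\cup\abar}) \subs \ga^n \times \gm^n$ together with the $\Q$-linear relations of $\ybar$ over $\gen{A\abar}$. Transport via $\sigma$ to obtain a variety $U' = \sigma(U)$ over $F_{A\cup\bbar}$. To realize this candidate in $B$ we invoke axiom~4 (strong exponential-algebraic closedness) over the strong partial exponential subfield $F_{A\cup\bbar}$ (strong because $A \cup \bbar \strong B$): provided $U'$ is rotund and additively and multiplicatively free, axiom~4 produces a generic point $(\ybar', e^{\ybar'}) \in U'(B)$, from which $\sigma^+$ is assembled. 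These algebro-geometric conditions transfer from $U$ to $U'$ via $\sigma$, and rotundity of $U$ follows from $A\cup\abar \strong B$: for every integer matrix $M$ of rank $k$ we have $\dim(M_*U) \ge \td(M\ybar, e^{M\ybar}/F_{A\cup\abar}) \ge \ldim_\Q(M\ybar/\gen{A\cup\abar})$ by the Schanuel inequality, and when $\ybar$ is $\Q$-linearly independent modulo $\gen{A\cup\abar}$ this last quantity equals $k$.

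The main technical obstacle is the case where $\ybar$ is $\Q$-linearly dependent modulo $\gen{A\cup\abar}$ (the hypothesis only forces independence modulo the narrower $\gen{A'\cup\abar}$). In that case $\ldim_\Q(M\ybar/\gen{A\cup\abar})$ can drop below $\rk M$, and the rotundity argument for $U$ fails, as does additive freeness when some $y_i$ lies in $\gen{A\cup\abar}$. One must first reduce to the independent case by absorbing each $\Q$-linear dependence $\sum c_i y_i = \lambda + \alpha$ (with $\lambda \in \gen{A}$, $\alpha \in \gen{\abar}$) into the parameter data: add $\lambda$ to the parameter tuple, replace $V$ by a smaller variety $V^*$ over $\Q$ which encodes the offending dependence as an algebraic power relation in the $\gm$-factor (clearing denominators in the $c_i$), and correspondingly shrink the witness to a lower-arity $\ybar$. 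After finitely many such steps $\ldim_\Q(\ybar/\gen{A\cup\abar}) = n$, so the rotundity and freeness arguments succeed cleanly; the witness obtained on the $\bbar$-side via axiom~4 then reassembles into a witness for the original $R_{V,n}(\bbar, A')$ using the $\sigma$-transported dependences.
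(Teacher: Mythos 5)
The paper's own ``proof'' of this lemma is simply a citation to Zilber's Lemma~5.7 translated into the language $L$, so your attempt to give a direct argument is a genuinely different route. The high-level strategy --- transfer $\Q$-linear identities via $\sigma$, and transfer instances of $R_{V,n}$ by producing a witness $\ybar'$ on the $\bbar$-side using strong exponential-algebraic closedness --- is reasonable, but there are several real gaps in the execution.

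The most serious is the freeness verification, which you assert but never establish. Additive freeness of $U = \loc(\ybar,e^{\ybar}/F_{A\cup\abar})$ requires that no nontrivial integer combination $\sum c_i y_i$ lands in the \emph{field} $F_{A\cup\abar}$, which is a vastly larger set than the $\Q$-linear span $\gen{A\abar}$ over which you arrange independence; for example one could have $y_1=\exp(a_1)$ for some $a_1\in A$, which is $\Q$-linearly independent over $\gen{A\abar}$ yet lies in $F_{A\cup\abar}$, and then $U$ is a coset of a proper subspace so axiom~4 does not apply. (Multiplicative freeness has the analogous problem.) Your rotundity computation is sound in the independent case, but it does nothing toward freeness, and the reduction to the free case is a substantial further step that you leave out. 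A second gap is your claim that the isomorphism type of $F_{A\cup\abar\cup\ybar}$ over $F_{A\cup\abar}$ is determined by $U$ and the $\Q$-linear relations of $\ybar$: it is not --- the full Kummer data $(V_m)_{m\in\N^+}$ (equivalently, Kummer-genericity of some $V_{m_0}$, Proposition~\ref{prop:thumbtack}) is required, and this is precisely the subtlety that drives the rest of the paper. That claim is not strictly needed for your argument (you only need a witness, not a full isomorphism $\sigma^+$), but it signals that the Kummer-theoretic issues which also arise in resolving the linear-dependence ``obstacle'' are being elided. Finally, your sketch for absorbing a dependence $\sum c_i y_i = \lambda + \alpha$ is not rigorous: for $\lambda\in\gen{A}\setminus A$, the element $\exp(\lambda)$ is only a product of rational powers of elements $\exp(a)$ with $a\in A$, hence only \emph{algebraic} over $F_A$ and not in general in $F_A$, so ``adding $\lambda$ to the parameter tuple'' and encoding the relation as an algebraic power relation requires a choice of roots that must be controlled, and you do not say how.
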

\begin{proof}
This is \cite[Lemma~5.7]{Zilber05peACF0}, in our language.
\end{proof}

\section{Quasiminimal excellent classes}
Zilber's method of proof for Theorem~\ref{categoricity} was to show that \EC\ is a \qmec, that is, a class of structures equipped with pregeometries satisfying certain axioms. We follow the presentation of \qmec es from \cite{OQMEC}, which has axioms 0, I, II, III and IV.
\begin{prop}
The class \EC\ satisfies axioms 0, I, and IV of \qmec es.
\end{prop}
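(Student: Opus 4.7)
The plan is to verify each of the three axioms in turn; each follows quickly from material established in Section~2 or from known facts about exponential fields.

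For Axiom~0 --- that each structure in the class comes equipped with a pregeometry $\cl$ compatible with the $L$-structure --- I take $\cl = \ecl$, as defined in Section~2.2. The fact that $\ecl$ is a pregeometry is a standard consequence of the Schanuel property (axiom~3 of \EC): the predimension $\delta$ is submodular because transcendence degree and $\Q$-linear dimension are submodular, and $\delta\ge 0$; the closure operator associated to such a predimension is a pregeometry, as worked out in \cite{FPEF}. Compatibility of $\ecl$ with the language $L$ --- invariance under $L$-isomorphisms and the property that $a\in\ecl(A)$ is detectable by a quantifier-free $L$-formula over $A$ --- is built into the choice of $L$: the relations $R_{V,n}$ name precisely the varietal conditions $(\abar,\exp\abar)\in V$ imposing $\Q$-linear genericity over $A$, and these are exactly what $\delta(\abar/\hull{A}) = 0$ reduces to.

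Axiom~I is the countable closure property, which is literally axiom~5 of \EC.

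For Axiom~IV --- a form of $\aleph_0$-homogeneity asserting that partial $L$-isomorphisms between closed subsets of a model extend --- my plan is to reduce to Lemma~\ref{pE-subfields}. Given $A\strong B$ and two finite strong extensions $A\cup\abar \strong B$ and $A\cup\bbar \strong B$ with matching quantifier-free $L$-types over $A$, the lemma (applied in its natural converse direction, which is immediate from the same partial-exponential-subfield analysis of \cite{FPEF}) yields an isomorphism $F_{A\cup\abar} \iso F_{A\cup\bbar}$ of partial exponential subfields over $F_A$ sending $\abar$ to $\bbar$. A back-and-forth, at each stage extending by a single element and realising the required strong one-point extension inside $B$ via strong exponential-algebraic closedness (axiom~4 of \EC), then produces the desired $L$-isomorphism; the finite-dimensionality of $\ldim_\Q(\hull{A\abar}/A)$ recorded in Section~2.2 keeps the bookkeeping at each stage finite.

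Axioms~0 and~I are essentially built into the framework, so the only non-trivial verification is Axiom~IV; and even there the substantive work has been done by Lemma~\ref{pE-subfields} and by Zilber's construction, so I do not expect significant obstacles. The one point that needs a little care is the use of the converse of Lemma~\ref{pE-subfields}, namely that the $L$-quantifier-free type of $\abar$ over $A$ determines $F_{A\cup\abar}$ over $F_A$; this is direct from the definition of the $R_{V,n}$ and the fact that $\gen{A\abar}$ is the $L$-substructure generated by $A\cup\abar$.
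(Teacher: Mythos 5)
Your proof fails because you have misidentified the content of the axiom numbers in \cite{OQMEC}, which the paper follows explicitly. In that formulation, \emph{Axiom 0} is simply closure of the class under isomorphism; \emph{Axiom I} has three parts, namely I.1 ($\ecl$ is a pregeometry satisfying the countable closure property), I.2 (closed sets are models in the class) and I.3 (the pregeometry is determined by quantifier-free $L$-formulas over finite sets); and \emph{Axiom IV} consists of IV.1 (closure under unions of chains) and IV.2 (existence of an infinite-dimensional model). The $\aleph_0$-homogeneity conditions you describe under the heading of ``Axiom IV'' are in fact \emph{Axiom II}, which is the subject of the paper's \emph{next} proposition (and which the paper treats at length precisely because Zilber's original argument for it has a gap).

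Consequently your proposal does not address what the stated proposition actually asserts. You say nothing about closure under isomorphism (the real Axiom 0), nothing about closed sets being models (I.2), nothing about closure under unions of chains (IV.1), and nothing about the existence of an infinite-dimensional model (IV.2). Your first paragraph (that $\ecl$ is a pregeometry compatible with $L$) is roughly a hybrid of I.1 and I.3 but is attributed to Axiom 0; your second paragraph identifies Axiom I with the countable closure property alone, which is only a fragment of I.1. Meanwhile the back-and-forth argument you build out of Lemma~\ref{pE-subfields} and strong exponential-algebraic closedness is an attempt at Axiom II and so is out of scope here --- it also cannot be dismissed as routine, since the paper isolates a genuine gap in Zilber's proof of that axiom concerning existence of generic realisations when $G$ has infinite dimension. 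The paper's own proof of the present proposition is largely a matter of citation (Zilber's Proposition~4.11, Lemma~5.3 and Lemma~5.6, together with the construction of $B$ in \cite{FPEF}); the substantive work lies in Axioms II and III, which are the later propositions. You should re-read the definition of quasiminimal excellent class in \cite{OQMEC} and match your arguments to the correct axiom labels before proceeding.
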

\begin{proof}
Axiom 0 (closure under isomorphism) is immediate. Axiom I.1 states that the operation $\ecl$ really is a pregeometry which satisfies the countable closure property. The countable closure property is part of the definition of \EC, and the axioms of pregeometries are given by \cite[Proposition~4.11]{Zilber05peACF0}. For an alternative approach, see \cite{EAEF}. Axiom I.2 states that closed sets with respect to the pregeometry are models in the class, which is \cite[Lemma~5.3]{Zilber05peACF0}. Axiom I.3 states that the pregeometry is determined by quantifier-free $L$-formulas. The language $L$ is chosen to make this true and it is not hard to see from the definition of $\ecl$ given above. Alternatively, see \cite[Lemma~5.6]{Zilber05peACF0} for Zilber's account for his language $L^*$; the same proof works for our language $L$. 

Axiom IV.1 (closure under unions of chains) is immediate, and since we have the model $B$, axiom IV.2 (the existence of an infinite-dimensional model) follows.
\end{proof}

For Axioms~II and III, the proofs in \cite{Zilber05peACF0} are incomplete.
Axiom~III we handle in the next section; we consider Axiom~II here. The
statement of \cite[Proposition~5.9(i)]{Zilber05peACF0} is technically false,
since it makes no assumption that the submodel $G$ be closed. This assumption is not part of Zilber's version of Axiom~II, but without it the axiom is false for \EC. Moreover, the proof contains a gap where it assumes that the realisation of a type over
$C_2$ is generic over $GC_2$. If $G$ is of infinite dimension, the existence
of such a generic realisation is not immediate. We fill this gap in the
following Proposition.

\begin{prop}
  The class \EC\ satisfies axiom II of \qmec es.
\end{prop}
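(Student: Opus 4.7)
The plan is to follow Zilber's argument from Proposition~5.9 of \cite{Zilber05peACF0}, isolating and filling the gap flagged in the excerpt. Axiom~II is a form of $\aleph_0$-homogeneity over closed submodels: if $G$ is a closed submodel of $H\in\EC$ and $\bar{a},\bar{b}$ are finite tuples from $H$ with $\qftp_L(\bar{a}/G)=\qftp_L(\bar{b}/G)$, the matching $\bar{a}\mapsto\bar{b}$ extends to any further finite tuple from $H$. A standard back-and-forth setup reduces this to the following extension step: given any further finite $\bar{c}\in H$, produce $\bar{c}'\in H$ with $\qftp_L(\bar{a}\bar{c}/G)=\qftp_L(\bar{b}\bar{c}'/G)$. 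After replacing $\bar{a},\bar{b},\bar{a}\bar{c}$ by their hulls (so that the resulting unions with $G$ are strong in $H$, using $G\strong H$) and invoking Lemma~\ref{pE-subfields}, the target becomes: find $\bar{c}'\in H$ and an isomorphism of partial exponential fields $F_{G\bar{a}\bar{c}}\to F_{G\bar{b}\bar{c}'}$ extending the $\sigma\colon F_{G\bar{a}}\to F_{G\bar{b}}$ already provided by the hypothesis.

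I would decompose $\bar{c}$ as $\bar{c}_1\bar{c}_2$ where $\bar{c}_1$ is a $\Q$-linear basis for a complement of $\gen{G\bar{a}}$ inside $\gen{G\bar{a}\bar{c}}$ and $\bar{c}_2$ is algebraic (in the partial-exponential-field sense) over $F_{G\bar{a}\bar{c}_1}$. The real content is producing $\bar{c}_1'\in H$ realising the locus $V=\loc(\bar{c}_1,\exp(\bar{c}_1)/F_{G\bar{a}})$ under $\sigma$, with image $\Q$-linearly independent modulo $G\bar{b}$. This is exactly the step where Zilber's argument appeals to a \emph{generic realisation}, which in the infinite-dimensional case does not follow directly from Strong Exponential-Algebraic Closedness (axiom~4): SEAC takes as input a variety over a finitely generated field, whereas $F_{G\bar{a}}$ need not be finitely generated when $G$ has infinite dimension.

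The resolution is Proposition~\ref{prop:thumbtack}: because $G\bar{a}\strong H$ and $G$ is \emph{closed}, the partial exponential subfield $F_{G\bar{a}}$ is generated by the union of $\acl$-closures of $\ecl$-bases (after adjoining $\bar{a}$ as needed), so the infinite Kummer tower $(V_m)_{m\in\N^+}$ encoding the isomorphism type of $F_{G\bar{a}\bar{c}_1}$ over $F_{G\bar{a}}$ is determined by a single Kummer-generic variety $V_{m_0}$, already defined over a \emph{finitely generated} subfield $F_0\subseteq F_{G\bar{a}}$. Applying $\sigma$ transports $V_{m_0}$ to a Kummer-generic variety over $\sigma(F_0)\subseteq F_{G\bar{b}}$; SEAC, now fed finite data, produces a point $\bar{c}_1'\in H$ lying generically on $\sigma(V_{m_0})$, and Kummer-genericity together with the Schanuel property guarantee that $G\bar{b}\bar{c}_1'\strong H$. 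Finally $\bar{c}_2'$ is located inside the algebraically closed field $H$ using the partial-exponential-field algebraicity of $\bar{c}_2$ over $F_{G\bar{a}\bar{c}_1}$, and a last application of Lemma~\ref{pE-subfields} gives the required qftp equality. The main obstacle is precisely the reduction to finitely generated data over which SEAC applies, which is made possible only by Proposition~\ref{prop:thumbtack} and the assumption that the submodel $G$ is closed; everything else is formal back-and-forth bookkeeping.
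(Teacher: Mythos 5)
Your outline correctly identifies the place where Zilber's argument breaks down: SEAC hands you a point over a \emph{finitely generated} base, and when $G$ has infinite dimension it is not immediate that the point can be taken generic over all of $G$. But your proposed fix does not actually close that gap. Proposition~\ref{prop:thumbtack} and Kummer-genericity address a different problem — replacing the infinite tower $(V_m)_m$ by a single variety $V_{m_0}$ defined over finitely many parameters — and say nothing about the transcendence degree of the realising point over the infinite-dimensional $F_{G\bar b}$. When you write that ``SEAC, now fed finite data, produces a point $\bar{c}_1'$ lying generically on $\sigma(V_{m_0})$, and Kummer-genericity together with the Schanuel property guarantee that $G\bar{b}\bar{c}_1'\strong H$'', the assertion is exactly where the original proof jumps: SEAC gives genericity only over the finite data $\sigma(F_0)\bar b$, the Schanuel property gives $\delta(\bar c_1'/G\bar b)\geq 0$ but not $\Q$-linear independence of $\bar c_1'$ over $G\bar b$, and nothing in your argument rules out that $\bar c_1'$ has a $\Q$-linear dependence (or a further algebraic relation) involving elements of $G$ outside $\sigma(F_0)$. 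Unless $\dim V=n$ \emph{and} $\bar c_1'$ is already known to be $\Q$-linearly independent over $G\bar b$, strongness and genericity over $G\bar b$ do not follow; and even the first of these reductions is not established.

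The paper's argument uses a genuinely different mechanism to upgrade from a finitely generated base to the whole of $G$. Having first recorded that the case $G=\emptyset$ (``$\aleph_0$-homogeneity over $\emptyset$'') does follow from SEAC, it picks a finitely generated $G_0\strong G$ over which the Kummer-generic locus $V$ is defined, applies the $\emptyset$-homogeneity to find $d'$ with $c'd'\equiv_{G_0}cd$, and then — and this is the missing ingredient in your proposal — chains through an $\ecl$-basis $X$ of $G$ over $G_0$: it argues that $G$ is $\ecl$-independent from $d'$ over $G_0$, invokes uniqueness of generic qf-types (Axiom~II(i)) to get $d'\equiv_{G_0X}d$, uses another application of $\emptyset$-homogeneity to transfer the ACF-independence of $(d,\exp d)$ from $G=\ecl(G_0X)$ over $G_0$ to $(d',\exp d')$, and only then concludes that $(d',\exp d')$ is generic in $V$ over $G$ so that Lemma~\ref{pE-subfields} applies. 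None of this pregeometry-level bootstrapping appears in your write-up, and without it the step from ``generic over a finite part of $G$'' to ``generic over $G$'' is unjustified. (Separately, your proposal only addresses part~(ii) of Axiom~II; part~(i), uniqueness of generic qf-types, is not mentioned, though it is both an ingredient of the paper's argument and part of what must be proved.)
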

\begin{proof}
  We write $a \equiv_C b$ to mean $\qftp(a/C)=\qftp(b/C)$.
  Part (i) of axiom II, stating uniqueness of generic qf-types, follows
  from the definition of $\ecl$ and Lemma~\ref{pE-subfields}.

  For part (ii), suppose $G$ is a closed submodel of $B$ or $G=\emptyset$, and
  $c$ and $d$ are tuples in $B$, $d \in \ecl(Gc)$, and $c' \equiv_{G} c$; we
  want to find $d'$ such that $c'd' \equiv_{G} cd$.
  If $G=\emptyset$, this follows from the definition of $L$ and the strong
  exponential-algebraic closedness axiom. We will use this result
  in the course of the remainder of the proof of this proposition, and will
  refer to it as \emph{$\aleph_0$-homogeneity over $\emptyset$}.

  
  There remains the case that $G$ is a closed submodel.
  Extending $d$, we may assume that $c\in \gen{Gd}$.
  Since $\hull{Gd}$ is finitely generated over $G$, we may further assume
  that $\gen{Gd} \strong B$.
  By the $N=1$ case of Proposition~\ref{prop:thumbtack}, we may further assume
  that $V := \loc(d,\exp(d)/G)$ is Kummer-generic over $F_G$.
  Say $G_0 \strong G$ is finitely generated and such that $V$ is over $G_0$
  and $c\in \gen{G_0d}$. Note then that $d\in\ecl(G_0c)$.
  By $\aleph_0$-homogeneity over $\emptyset$, say $c'd' \equiv_{G_0} cd$.
  $G$ is $\ecl$-independent over $G_0$ from $d$, hence from $c$, hence from $c'$,
  hence from $d'$.
  Say $X$ is an $\ecl$-basis for $G$ over $G_0$. Then by uniqueness of
  generics (axiom II(i)), $d' \equiv_{G_0X} d$.
  Now $(d,\exp(d))$ is ACF-independent from $G=\ecl(G_0X)$ over $G_0$,
  so by $\aleph_0$-homogeneity over $\emptyset$, the same holds of
  $(d',\exp(d'))$. 
  So $(d',\exp(d'))$ is generic over $G$ in $V$, so by $\gen{Gd}\strong B$,
  Kummer-genericity of $V$, and Lemma~\ref{pE-subfields}, we have
  $d' \equiv_{G} d$ as required.
\end{proof}

We will make use of two corollaries of these axioms.
\begin{cor}\label{QM corollary} Let $H$ be $\ecl$-closed in $B$ and let $X,Y \subs B$ each be $\ecl$-independent over $H$, and of the same cardinality. Then there is an automorphism of $B$ fixing $H$ pointwise, which sends $X$ to $Y$.
\end{cor}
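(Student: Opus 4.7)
The plan is a standard back-and-forth using axioms II(i) and II(ii). Since $B = B_{\aleph_0}$ is countable, $X$ and $Y$ are at most countable. Fix a bijection $\sigma: X \to Y$. Since $H$ is $\ecl$-closed it is a submodel by axiom I.2, and axiom II(i) gives that for each finite tuple $\bar{x} \subseteq X$ the $\ecl$-independent tuples $\bar{x}$ and $\sigma(\bar{x})$ have the same qf-type over $H$; hence $f_0 := \id_H \cup \sigma$ is a partial $L$-isomorphism.

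Enumerate $B = \{b_n : n \in \N\}$. I would then recursively build an increasing chain of partial $L$-isomorphisms $f_0 \subseteq f_1 \subseteq \cdots$, each fixing $H$ pointwise, arranging $b_n \in \dom(f_{2n+1}) \cap \im(f_{2n+2})$. The union $f := \bigcup_n f_n$ is then a bijection $B \to B$ fixing $H$, sending $X$ to $Y$, and preserving qf-types of all finite tuples, so is the desired automorphism.

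The hard part will be the extension step. Given $f_n: A_n \to A_n'$ and a prescribed $b \in B$ to add to the domain: if $b \in \ecl(A_n)$, finite character of $\ecl$ yields a finite $\bar{c} \subseteq A_n$ with $b \in \ecl(H\bar{c})$, and I apply axiom II(ii) with $G = H$ to obtain $b'$ with $\bar{c}b \equiv_H f_n(\bar{c})b'$. If instead $b \notin \ecl(A_n)$, I pick any $b' \in B \setminus \ecl(A_n')$, possible as $B$ has infinite exponential transcendence degree. In either case the delicate point is to verify that $f_n \cup \{(b,b')\}$ remains a partial $L$-iso on all of $A_n \cup \{b\}$: for every finite $\bar{a} \subseteq A_n$ one needs $\bar{a}b \equiv_H f_n(\bar{a})b'$. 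This reduces to a further application of II(ii) (in the dependent case, enlarging $\bar{c}$ to include $\bar{a}$) or II(i) (in the independent case, applied to $\ecl$-bases of $\bar{a}b$ and $f_n(\bar{a})b'$), combined with the existing $f_n$-compatibility on the already-mapped parts.
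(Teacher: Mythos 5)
The paper's own ``proof'' is a one-line citation of Theorem~2.1 of \cite{OQMEC}; you give a direct back-and-forth from Axioms~II(i) and II(ii). That is a genuinely different route, and a reasonable one, but as written it has a gap at exactly the point you flag as ``delicate''.

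Concretely: at stage $n$ the domain $A_n \supseteq H \cup X$ is infinite. When $b \in \ecl(A_n)$, finite character gives you a finite $\bar c \subseteq A_n$ with $b \in \ecl(H\bar c)$, and II(ii) gives some $b'$ with $\bar c b \equiv_H f_n(\bar c)b'$. But you then need this single $b'$ to satisfy $\bar a b \equiv_H f_n(\bar a) b'$ for \emph{every} finite $\bar a \subseteq A_n$. Your proposed fix --- ``enlarging $\bar c$ to include $\bar a$'' and re-applying II(ii) --- does not obviously produce the \emph{same} $b'$: Axiom~II(ii) asserts existence of a witness, not uniqueness, and a fresh application with a larger $\bar c$ may return a different one. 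Similarly, in the generic case you invoke II(i), but II(i) gives uniqueness of generic types only over \emph{closed} sets, whereas you need to compare types over the two non-closed sets $H\bar a$ and $Hf_n(\bar a)$. Patching the finite choices together into one coherent extension of the infinite partial map is precisely the content that needs proving; it is a stationarity/amalgamation statement over finite sets that the axioms as stated in this paper do not deliver in one step. That is what the cited Theorem~2.1 of \cite{OQMEC} supplies, and re-deriving it inline would take substantially more care than the sketch gives. So the overall shape (countable back-and-forth over $H$, with $f_0 = \id_H \cup \sigma$ justified by II(i)) is sound, but the extension step is not yet a proof.
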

\begin{proof}
We apply Theorem~2.1 of \cite{OQMEC}. 
\end{proof}

\begin{cor}\label{language}
Let $\abar,\bbar$ be finite tuples from $B$. The following are equivalent.
 \begin{enumerate}[(i)]
  \item  $\qftp_L(\abar/\emptyset) = \qftp_L(\bbar/\emptyset)$ 
   \item  There is an automorphism of $B$ taking $\abar$ to $\bbar$.
  \item  For every $\Loo$-formula $\phi(\xbar)$, we have $B \models \phi(\abar)$ iff $B \models \phi(\bbar)$. 
 \end{enumerate}
\end{cor}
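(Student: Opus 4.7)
The implications $(ii) \Rightarrow (iii) \Rightarrow (i)$ are immediate, since automorphisms preserve truth of all $\Loo$-formulas and quantifier-free $L$-formulas are themselves $\Loo$-formulas. The content lies in $(i) \Rightarrow (ii)$.

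The plan is to construct the automorphism by back-and-forth in the countable model $B = B_{\aleph_0}$, extending the partial map $\abar \mapsto \bbar$ one element at a time while preserving qf-$L$-type over $\emptyset$. The back-and-forth step is this: given finite tuples $\cbar \equiv^{\mathrm{qf}}_L \dbar$ from $B$ and any new $e \in B$, produce $f \in B$ with $\cbar e \equiv^{\mathrm{qf}}_L \dbar f$. When $e \in \ecl(\cbar)$, this is a direct invocation of Axiom~II(ii) with $G = \emptyset$, which also guarantees $f \in \ecl(\dbar)$. When $e \notin \ecl(\cbar)$, I pick any $f \in B \setminus \ecl(\dbar)$, possible because $B$ has infinite $\ecl$-dimension while $\ecl(\dbar)$ is finite-dimensional; I then need to verify $\cbar e \equiv^{\mathrm{qf}}_L \dbar f$.

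This generic-extension case is the main obstacle, since Axiom~II(i) gives uniqueness of generic qf-types only over closed submodels, not over arbitrary finite tuples. To bridge the gap, I would first use Axiom~I.3 (which says $\ecl$-dependence is detected by qf-$L$-formulas) to note that $\cbar$ and $\dbar$ have $\ecl$-bases of equal size, then apply Corollary~\ref{QM corollary} to get a preliminary automorphism aligning these bases, reducing to the situation in which $\cbar$ and $\dbar$ share a common $\ecl$-basis $\cbar_0$ of their $\ecl$-closures. Then Axiom~II(i) over the closed submodel $\ecl(\cbar_0)$ yields that any two $\ecl$-generic elements over $\cbar_0$ have the same qf-$L$-type, while Axiom~II(ii) aligns the remaining algebraic parts lying inside $\ecl(\cbar_0)$. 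Throughout, Lemma~\ref{pE-subfields} underlies the translation between qf-$L$-type and partial-exponential-subfield isomorphism that these arguments tacitly use.
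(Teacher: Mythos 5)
Your overall plan matches the paper's one-line sketch (``(i)~$\Rightarrow$~(ii) follows from axiom~II via a back-and-forth argument''), and (ii)~$\Rightarrow$~(iii)~$\Rightarrow$~(i) and the algebraic Case ($e\in\ecl(\cbar)$) are fine. But the generic case has a genuine gap at its last step.

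After your preliminary automorphism you have $\cbar,\dbar\subs\ecl(\cbar_0)$ with $\cbar\equiv_\emptyset\dbar$, and you pick $e,f$ both $\ecl$-generic over $\ecl(\cbar_0)$. Axiom~II(i), in the form stated in this paper (``uniqueness of generic qf-types over a closed set''), then gives only $\qftp_L(e/\ecl(\cbar_0))=\qftp_L(f/\ecl(\cbar_0))$; restricting the parameters gives $\cbar e\equiv_\emptyset\cbar f$ and $\dbar e\equiv_\emptyset\dbar f$. To conclude $\cbar e\equiv_\emptyset\dbar f$ you would additionally need $\cbar f\equiv_\emptyset\dbar f$, i.e.\ $\cbar\equiv_f\dbar$, and this does not follow from $\cbar\equiv_\emptyset\dbar$ together with genericity of $f$: the uniqueness-of-types statement says nothing about whether a \emph{given} bijection $\cbar\mapsto\dbar$ between two tuples inside $\ecl(\cbar_0)$ remains type-preserving after adjoining the parameter $f$. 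Your phrase ``Axiom~II(ii) aligns the remaining algebraic parts'' does not supply the missing step, because II(ii) produces a \emph{new} image tuple realising the same type, not the fixed tuple $\dbar$ you already chose.

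What actually makes the back-and-forth go through is the stronger, partial-embedding formulation of Axiom~II in \cite{OQMEC}: one first uses II(ii) (over $\emptyset$) in a countable back-and-forth \emph{inside} $\ecl(\cbar_0)$ to extend $\cbar\mapsto\dbar$ to a qf-type-preserving bijection $\psi\colon\ecl(\cbar_0)\to\ecl(\cbar_0)$ fixing $\cbar_0$, and only then applies II(i) in its partial-embedding form to extend $\psi$ by $e\mapsto f$. The union over an increasing chain of such closed sets (not of finite tuples) is the desired automorphism. So your two-case decomposition is the right shape, but the generic step needs to be run over closed sets rather than over finite tuples, and needs Axiom~II as a statement about extending partial embeddings, not merely as a statement about equality of generic types.
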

\begin{proof}
(i) $\implies$ (ii) follows from axiom II via a back-and-forth argument. (ii) $\implies$ (iii) and (iii) $\implies$ (i) are immediate.
\end{proof}


\section{Excellence}
Axiom III of \qmec es, the excellence property, is the most technical and it is where the most significant gap in Zilber's proof is. Specifically, in the proof of \cite[Proposition 5.15]{Zilber05peACF0} there is no reason why $A'B'$ should not lie in $C$, and then $V'$ would not contain $V_0$. Indeed, that proposition as stated is false, because the definition of finitary used there does not give sufficiently strong hypotheses.

The excellence property states that types over certain configurations of submodels, called \emph{crowns}, are determined over finite subsets. Let $X$ be an exponential transcendence base for $B$, let $X_1,\ldots,X_N \subs X$ with $\bigcup_{i=1}^N X_i = X$ and such that $X_0 \leteq \bigcap_{i=1}^N X_i$ is cofinite in $X$.  The crown associated to this data is $C = \gen{\bigcup_{i =1}^N \ecl^B(X_i)}$. We will prove axiom III of quasiminimal excellent classes in the special case of crowns $C$ of this form. 

\begin{prop}\label{excellence}
Let $r \in \N$ and $\abar \in B^r$. Then there is a finite tuple $c_0$ from $C$ such that if $\bbar\in B^r$ and $\qftp_L(\bbar/c_0) = \qftp_L(\abar/c_0)$ then $\qftp_L(\bbar/C) = \qftp_L(\abar/C)$.
\end{prop}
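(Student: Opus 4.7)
The strategy is to reduce, via Lemma~\ref{pE-subfields}, the desired qf-type equality $\qftp_L(\bbar/C) = \qftp_L(\abar/C)$ to the existence of an isomorphism of partial exponential subfields $F_{C\abar^*} \iso F_{C\bbar^*}$ fixing $F_C$ pointwise, for suitable finite strong extensions $\abar^* \supseteq \abar$ and $\bbar^* \supseteq \bbar$. The finite tuple $c_0$ is chosen so that $\qftp_L(\bbar/c_0)$ forces such an isomorphism.

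First I would verify $C \strong B$, using that each $\ecl^B(X_i)$ is a model and hence $\strong B$, together with the Schanuel property applied around the cofinite common part $X_0$. Since $C \strong B$ and $\ldim_\Q(\hull{C\abar}/C)$ is finite, I extend $\abar$ to a finite tuple $\abar^*$ with $\gen{C\abar^*} = \hull{C\abar} \strong B$, and set $V := \loc(\abar^*,\exp(\abar^*)/F_C)$. Selecting algebraic transcendence bases of the $\ecl(X_i)$ that combine into an algebraically independent set $Y$ in $B$, Proposition~\ref{prop:thumbtack} shows that $V$ is Kummer-generic over $F_C$. I then pick a finite $c_0 \subs C$ so that $V$ is defined over $F_{c_0}$, every $\Q$-linear dependence of $\abar^*$ modulo $C$ is already detected by $c_0$, and $\td(\abar^*,\exp(\abar^*)/F_{c_0}) = \td(\abar^*,\exp(\abar^*)/F_C)$.

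Given $\bbar \in B^r$ with $\qftp_L(\bbar/c_0) = \qftp_L(\abar/c_0)$, the $L$-relation $R_{V,n}$ featuring in this qf-type provides a witness $\bbar^* \supseteq \bbar$ with $(\bbar^*,\exp(\bbar^*)) \in V$ and $\bbar^*$ $\Q$-linearly independent modulo $c_0$. The remaining tasks are (a) $\gen{C\bbar^*} \strong B$ and (b) that $(\bbar^*,\exp(\bbar^*))$ is generic in $V$ \emph{over $F_C$}, not merely over $F_{c_0}$; Lemma~\ref{pE-subfields} then delivers $\qftp_L(\bbar^*/C) = \qftp_L(\abar^*/C)$, finishing the proof. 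The main obstacle—and exactly the gap in Zilber's original argument—is (b): a priori $(\bbar^*,\exp(\bbar^*))$ could lie on a proper $F_C$-subvariety of $V$, or $\bbar^*$ could collapse into $C$ itself, the failure mode flagged in the introduction. My plan to close the gap is to exploit Kummer-genericity of $V$, which controls how proper $F_C$-subvarieties of $V$ can descend to $F_{c_0}$, combined with a transport argument using Corollary~\ref{QM corollary}, $\aleph_0$-homogeneity over $\emptyset$, and the strong exponential-algebraic closedness axiom, to replace $\bbar^*$ by an automorphic image fixing $c_0$ that sits in generic position over $C$; the $\Q$-linear independence imposed by $R_{V,n}$ then prevents the replacement from collapsing into $C$.
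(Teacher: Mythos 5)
Your high-level outline matches the paper's: take $\alphabar$ (your $\abar^*$) a $\Q$-linear basis of $\hull{C\abar}$ over $C$ so that $\gen{C\alphabar}\strong B$, set $V = \loc(\alphabar, e^{\alphabar}/C)$, arrange via Lemma~\ref{lem:thumbtack} and Proposition~\ref{prop:thumbtack} that $V$ is Kummer-generic over $F_C$, choose a finite $c_0 \strong B$ in $C$ with $V$ defined over $\Q(c_0)$, and reduce to showing that the witness $\betabar$ coming from $\bbar$'s matching qf-type makes $(\betabar, e^{\betabar})$ generic in $V$ over $C$. Strongness of $c_0$ together with $\dim V = n$ already gives genericity over $c_0$; the whole content of the proposition --- and the step you correctly flag as Zilber's gap --- is to upgrade this to genericity over the infinite set $C$, i.e.\ to show $\betabar$ is $\Q$-linearly independent over $C$ and not merely over $c_0$.

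Your proposed method for that step does not work, and you do not identify the argument the paper actually uses. Replacing $\bbar^*$ by a $c_0$-automorphic image $\sigma(\bbar^*)$ in generic position over $C$ would, even if achievable, only yield $\qftp_L(\sigma(\bbar)/C) = \qftp_L(\abar/C)$; since $\sigma$ fixes $c_0$ but moves $C$, there is no way to get back from $\sigma(\bbar)$ to $\bbar$. The ``$\Q$-linear independence imposed by $R_{V,n}$'' is independence over the finite parameter tuple only, which says nothing about the infinite $C$, and Kummer-genericity governs the $m$-division structure of $V$, not its proper $F_C$-subvarieties, so neither provides the needed upgrade. The paper's actual resolution is an infinitary-definability argument that depends essentially on $X_0$ being cofinite in $X$: it writes $C = \bigcup_{\zbar \finsub X_0} C_\zbar$ as a union of sets $C_\zbar$ each $\Loo$-definable over $\zbar\cup c_0$, encodes ``$\xbar$ is $\Q$-linearly independent over $C$'' into a single $\Loo(c_0)$-formula $\theta(\xbar)$ quantifying over $\ecl$-independent tuples $\zbar$, and transfers $B \models \theta(\alphabar)$ to $B \models \theta(\betabar)$ via Corollary~\ref{language}. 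This $\Loo$-transfer step is exactly what is new relative to Zilber and exactly what is missing from your sketch.
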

The full axiom III is the same statement but for more general crowns where we do not require that $\bigcup_{i=1}^N X_i = X$ or that $X_0$ is cofinite in $X$. In fact we have not been able to prove the full axiom III. Specifically, the proof of Lemma~\ref{field indep} below makes essential use of the fact that $X_0$ is infinite.

\begin{proof}[Proof of Theorem~\ref{categoricity} assuming Proposition~\ref{excellence}]
 We note that the proof of categoricity of \qmec es in \cite{OQMEC} (specifically Theorem~3.3) uses only crowns of exactly the form given, where the intersection $X_0$ of the $X_i$ is infinite, and each $X_i \minus X_0$ is finite. Hence it is enough to prove Proposition~\ref{excellence} in place of axiom III.
\end{proof}

\begin{notation}
Write $[N] = \{1,\ldots,N\}$, and for $s\subs [N]$ let $Z_s = \bigcap_{i\in [N]\minus s} X_i$ and let $A_s = \ecl^B(Z_s)$. Then $A_{[N]\minus \{i\}} = \ecl^B(X_i)$ and $C = \gen{\bigcup_{s \subsetneq [N]} A_s}$. For $s \subs [N]$ we write $A_{\ngeqslant s} \leteq 
\bigcup \class{A_t}{t \subs [N], t \not\sups s}$ and $A_{<s} \leteq \bigcup \class{A_r}{r \subsetneq s}$.

\end{notation}

\begin{lemma}\label{field indep}
  The system $(A_s)_{s \subsetneq [N]}$ is an independent system in the sense of Shelah with respect to ACF${}_0$-non-forking independence. That is, for each $s$, 
\[\findep{A_s}{A_{\ngeqslant s}}{A_{<s}}{ACF_0}\]
or equivalently, for each finite tuple $\abar \in A_s$, $\td(\abar/A_{\ngeqslant s}) = \td(\abar/A_{<s})$. 
\end{lemma}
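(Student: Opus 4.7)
The plan rests on one key identity, which I establish first: for any finite $V \subseteq X$, the predimension satisfies $\delta(V) = |V|$, equivalently $V \cup \exp(V)$ is algebraically independent over $\Q$. Since $X$ is $\ecl$-independent, $\delta(v/\hull{X\setminus\{v\}}) > 0$ for each $v \in X$; combined with the trivial bound $\delta(v/\cdot) \leq 1$ on single elements, this forces equality, and the chain rule for $\delta$ propagates to any finite $V$.

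Fix $s \subsetneq [N]$ and a finite tuple $\alpha \in A_s$. By finite character it suffices to show $\findep{\alpha}{\beta}{\gamma}{ACF_0}$ for every finite $\beta \in A_{\ngeqslant s}$ and every finite $\gamma \subseteq A_{<s}$ chosen sufficiently large that $\td(\alpha/\gamma) = \td(\alpha/A_{<s})$. I localise by choosing finite sets $V_{<s} \subseteq V_s \subseteq Z_s$ and $V_{<s} \subseteq V_{\ngeqslant s} \subseteq \bigcup_{t \not\sups s} Z_t$ in $X$, with $V_s \cap V_{\ngeqslant s} = V_{<s}$, such that $\alpha, \gamma \in \ecl(V_s)$, $\beta \in \ecl(V_{\ngeqslant s})$ and $\gamma \in \ecl(V_{<s})$. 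Writing $V = V_s \cup V_{\ngeqslant s}$ and classifying $x \in X \setminus X_0$ by $t(x) = \{i : x \notin X_i\}$, the ``new'' pieces $V_s \setminus V_{<s}$ (elements with $t(x) = s$) and $V_{\ngeqslant s} \setminus V_{<s}$ (elements with $t(x)$ incomparable to $s$) are then disjoint subsets of $X$.

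The main computation: by the key identity, $\delta(V_\bullet) = |V_\bullet|$ for each of the four $V_\bullet$'s, so $\findep{F_{V_s}}{F_{V_{\ngeqslant s}}}{F_{V_{<s}}}{ACF_0}$ via disjointness of the new pieces. Setting $\alpha' := \alpha\gamma \in \ecl(V_s)$, the three identities
\[\delta(\alpha'/V_s) \;=\; \delta(\beta/V_{\ngeqslant s}) \;=\; \delta(\alpha'\beta/V) \;=\; 0\]
(the last from $\alpha'\beta \in \ecl(V)$ together with $\Q V \strong B$) express $\td(\alpha',\exp(\alpha')/F_{V_s})$, $\td(\beta,\exp(\beta)/F_{V_{\ngeqslant s}})$ and $\td(\alpha'\beta,\exp(\alpha'\beta)/F_V)$ as the corresponding $\Q$-linear dimensions. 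A direct dimension count then forces the subadditivity bound
\[\td\bigl(F_V(\alpha'\beta,\exp(\alpha'\beta))/F_{V_{<s}}\bigr) \;\leq\; \td\bigl(F_{V_s}(\alpha',\exp(\alpha'))/F_{V_{<s}}\bigr) \;+\; \td\bigl(F_{V_{\ngeqslant s}}(\beta,\exp(\beta))/F_{V_{<s}}\bigr)\]
to be an equality, yielding $\findep{F_{V_s}(\alpha',\exp(\alpha'))}{F_{V_{\ngeqslant s}}(\beta,\exp(\beta))}{F_{V_{<s}}}{ACF_0}$ and in particular $\findep{\alpha\gamma}{\beta}{F_{V_{<s}}}{ACF_0}$. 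The standard decomposition of ACF-independence then gives $\findep{\alpha}{\beta}{F_{V_{<s}}\gamma}{ACF_0}$; since $V_{<s} \cup \exp(V_{<s}) \subseteq A_{<s}$ we have $F_{V_{<s}} \subseteq \acl^{ACF_0}(A_{<s})$, and combining with the maximality of $\gamma$ gives $\td(\alpha/F_{V_{<s}}\gamma) = \td(\alpha/\gamma)$, whence $\td(\alpha/\gamma\beta) = \td(\alpha/\gamma)$ as required.

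The critical step is the subadditivity-to-equality deduction in the main computation: the three predimension identities and the $\Q$-linear bookkeeping must combine exactly to fill the upper bound. Here one uses $\ecl$-independence of $X$ to rule out spurious $\Q$-linear relations between $\alpha$ (or $\beta$) and basis elements outside its $V_\bullet$-support; any $\Q$-linear coincidence between $\alpha$ and $\beta$ is then forced into $\ecl(V_{<s}) \subseteq A_{<s}$ and can be absorbed into $\gamma$ without loss of generality. The hypothesis that $X_0$ is infinite enters in allowing $V_{<s}$ to be enlarged at will to capture any finite tuple from $A_{<s}$.
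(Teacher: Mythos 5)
Your approach is fundamentally different from the paper's: you attempt a direct predimension calculation, while the paper argues by contradiction using an automorphism of $B$ (via Corollary~\ref{QM corollary}) that pushes the finitely many parameters from $A_{\ngeqslant s}$ down into $A_{<s}$ while fixing the relevant tuple of $A_s$. Unfortunately, the direct route has a genuine gap at exactly the point you flag as ``the critical step.''

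Here is the problem. Keeping your notation, the three identities $\delta(\alpha'/V_s)=\delta(\beta/V_{\ngeqslant s})=\delta(\alpha'\beta/V)=0$ together with $\delta(V_\bullet)=|V_\bullet|$ translate the transcendence-degree subadditivity
\[
\td\bigl(F_V(\alpha'\beta,\exp(\alpha'\beta))/F_{V_{<s}}\bigr)\le \td\bigl(F_{V_s}(\alpha',\exp(\alpha'))/F_{V_{<s}}\bigr)+\td\bigl(F_{V_{\ngeqslant s}}(\beta,\exp(\beta))/F_{V_{<s}}\bigr)
\]
into the statement that the $\td$-defect on the left equals the $\ldim_{\Q}$-defect
\[
\ldim_\Q(\alpha'/V_s)+\ldim_\Q(\beta/V_{\ngeqslant s})-\ldim_\Q(\alpha'\beta/V)\ \ge\ 0.
\]
So the predimension bookkeeping shows only that the two defects are \emph{equal}; it does not show either is zero. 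To conclude you need the $\ldim_\Q$ defect to vanish, i.e.\ that $\gen{\alpha',V_s}$ and $\gen{\beta,V_{\ngeqslant s}}$ are $\Q$-linearly disjoint over $\gen{V_{<s}}$. You try to dispatch this with ``any $\Q$-linear coincidence between $\alpha$ and $\beta$ is forced into $\ecl(V_{<s})$ and can be absorbed into $\gamma$,'' but a common vector lies only in $\ecl(V_s)\cap\ecl(V_{\ngeqslant s})$, and there is no a priori reason this intersection is $\ecl(V_{<s})$: that is a modularity statement about the pregeometry $\ecl$ which is essentially of the same strength as the lemma you are trying to prove (it already encodes the $N=2$ independence). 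Enlarging $\gamma$ inside $A_{<s}$ does not help, because the coincidence vector need not lie in $A_{<s}$ at all until you have proved the lemma.

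A secondary sign that something is off: your argument, as written, never actually uses the hypothesis that $X_0$ is infinite. The localisation step works for any (finite or infinite) $X_0$, since each finite tuple of $A_{<s}$ lies in $\ecl$ of a finite subset of $\bigcup_{r\subsetneq s}Z_r$ regardless. But the paper stresses that $X_0$ infinite is used essentially here. In the paper's proof it enters when choosing $W_0\subs X_0$ of the same (finite) size as $X\minus Z_s$, disjoint from the finite parameter set $Z_0$, so that an automorphism fixing $\ecl^B(Z_0\cup(Z_s\minus X_0))$ can send $X\minus Z_s$ into $X_0$, thereby moving $\bbar_t$ into $A_{<s}$ and contradicting $a_0\notin\acl(\abar,A_{<s})$. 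That manoeuvre has no analogue in your calculation, which is further evidence that the $\Q$-linear disjointness step is where the missing content lives. (A minor additional remark: your proof of $\delta(V)=|V|$ via the chain rule needs care, since $\ecl$-independence bounds $\delta$ over hulls $\hull{\cdot}$ rather than over $\Q$-spans $\gen{\cdot}$, though this gap is fillable by noting inductively that $\gen{v_1,\dots,v_{i-1}}$ is already strong.)
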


\begin{proof}
Suppose the conclusion fails for $s$. So there are $a_0,\abar \in A_s$ and $\bbar_t \in A_t$ for each $t \subs [N]$ with $t \not\sups s$ such that, writing $\bbar$ for the tuple of all the $\bbar_t$, we have $a_0 \in \acl(\abar, \bbar) \minus \acl(\abar, A_{<s})$. Let $Z_0$ be a finite subset of $X_0$ such that $a_0,\abar \in \ecl^B(Z_0 \cup (Z_s \minus X_0))$ and $\bbar_t \in \ecl^B(Z_0 \cup (Z_t \minus X_0))$.

Now let $W_s = X \minus Z_s$, a finite subset of $X$, and choose a subset $W_0 \subs X_0 \minus Z_0$ of the same size. Then, by Corollary~\ref{QM corollary}, there is an automorphism $\sigma$ of $B$ fixing $\ecl^B(Z_0 \cup Z_s)$ pointwise, and sending $W_s$ to $W_0$. Then $\sigma(a_0) = a_0$ and $\sigma(\abar) = \abar$, but $\sigma(\bbar_t) \in \ecl^B(Z_t \cap Z_s) = A_r$ for some $r \subsetneq s$.

Then $a_0 \in \acl(\abar,\sigma(\bbar)) \subs \acl(\abar,A_{<s})$, a contradiction.
\end{proof}

\begin{lemma}\label{lem:thumbtack}
There is a transcendence base $Y$ of $C$ and subsets $Y_s$ of $Y$ for $s \subsetneq [N]$ such that $A_s = \acl( \bigcup_{t \subs s}Y_t)$. In particular, setting $Y^i = \bigcup_{s \subs [N] \minus \{i\}} Y_s$, we see that $C = \gen{\bigcup_{i =1}^N \acl(Y^i)}$. 
\end{lemma}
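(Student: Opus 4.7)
The plan is to construct the sets $Y_s$ by induction on $|s|$, maintaining the invariant that $Y^{\leq s} := \bigcup_{t \subseteq s} Y_t$ is an $\acl$-transcendence base of $A_s$. For the base case, take $Y_\emptyset$ to be any transcendence base of $A_\emptyset = \ecl^B(X_0)$. For the inductive step at $s$ with $|s| = k \geq 1$, assume the $Y_t$ for $|t| < k$ are already constructed; the key claim (proved below) is that $Y^{<s} := \bigcup_{t \subsetneq s} Y_t$ is algebraically independent. Granting this, choose $Y_s \subseteq A_s$ extending $Y^{<s}$ to a transcendence base of $A_s$; since $A_s$ is $\ecl$-closed (hence $\acl$-closed), this yields $\acl(Y^{\leq s}) = A_s$.

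The heart of the argument is the algebraic independence of $Y^{<s}$. Enumerate $\{t : t \subsetneq s\}$ as $r_1, \dots, r_M$ in an order compatible with $\subseteq$ (so that $r_i \subseteq r_j$ implies $i \leq j$), and prove by induction on $j$ that $\bigcup_{i \leq j} Y_{r_i}$ is algebraically independent. For the step from $j$ to $j+1$, the outer inductive hypothesis yields $Y_{r_{j+1}}$ algebraically independent over $A_{<r_{j+1}}$, since $Y_{r_{j+1}}$ was chosen as an extension of $Y^{<r_{j+1}}$ to a transcendence base of $A_{r_{j+1}}$ and $\acl(Y^{<r_{j+1}}) = \acl(A_{<r_{j+1}})$. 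Because the enumeration respects $\subseteq$, no $r_i$ with $i \leq j$ can contain $r_{j+1}$, hence $Y_{r_i} \subseteq A_{r_i} \subseteq A_{\ngeqslant r_{j+1}}$ for every such $i$. Lemma~\ref{field indep} applied at $r_{j+1}$ gives that $A_{r_{j+1}}$ is $\mathrm{ACF}_0$-nonforking-independent from $A_{\ngeqslant r_{j+1}}$ over $A_{<r_{j+1}}$, and since nonforking in $\mathrm{ACF}_0$ is preservation of transcendence degree, $Y_{r_{j+1}}$ remains algebraically independent over $A_{\ngeqslant r_{j+1}}$, hence over (and automatically disjoint from) $\bigcup_{i \leq j} Y_{r_i}$. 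Combined with the inner inductive hypothesis this completes the step.

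Once the construction is complete, the same enumeration argument, now applied to the full index poset $\{s : s \subsetneq [N]\}$, shows that $Y := \bigsqcup_{s \subsetneq [N]} Y_s$ is algebraically independent with $\acl(Y) \supseteq C$ (Lemma~\ref{field indep} is still invoked only at proper subsets of $[N]$), so $Y$ is a transcendence base of $C$. The ``in particular'' assertion follows on noting that $s \subsetneq [N]$ iff $s \subseteq [N] \setminus \{i\}$ for some $i$, so $\acl(Y^i) = A_{[N] \setminus \{i\}} = \ecl^B(X_i)$, whence $C = \gen{\bigcup_{i=1}^N \ecl^B(X_i)} = \gen{\bigcup_{i=1}^N \acl(Y^i)}$. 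The main obstacle is the independent-system induction in the middle paragraph; the crucial point is that the topological-sort enumeration ensures every previously enumerated $Y_{r_i}$ sits inside $A_{\ngeqslant r_{j+1}}$, placing it on exactly the ``side'' to which Lemma~\ref{field indep} applies.
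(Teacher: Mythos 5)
Your argument is correct and takes essentially the same route as the paper: topologically sort the index poset $\{s : s \subsetneq [N]\}$, inductively choose each $Y_s$ to be a transcendence base of $A_s$ over $A_{<s}$ (your ``extend $Y^{<s}$ to a transcendence base of $A_s$'' is equivalent since, as you note, $\acl(Y^{<s}) = \acl(A_{<s})$), and invoke Lemma~\ref{field indep} plus the topological ordering to conclude that the union is algebraically independent. The paper compresses this into a single induction over the full sorted list and states the conclusion in one sentence; your version unpacks the same reasoning into a nested outer induction on $|s|$ and an inner induction along the sort, but the content and the use of the key lemma are identical.
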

\begin{proof}
List the proper subsets $s$ of $[N]$ as $\emptyset = s_1, s_2, \ldots, s_{2^N-1}$ in such a way that $s_i \not\supseteq s_j$ for $i < j$. Inductively choose $Y_{s_i}$ a transcendence base for $A_{s_i}$ over $\bigcup_{r \subsetneq s_i} A_r$. By Lemma~\ref{field indep}, the set $Y = \bigcup_s Y_s$ is algebraically independent, hence a transcendence base for $C$.
\end{proof}

\begin{proof}[Proof of Proposition~\ref{excellence}]
By \cite[Lemma~5.14]{Zilber05peACF0}, $C \strong B$. Let $\alphabar \in B^n$ be a $\Q$-linear basis of $\hull{C \abar}$ over $C$ and let $V = \loc(\alphabar,e^{\alphabar}/C)$, the smallest subvariety of $\G_a^n\times\G_m^n$ defined over $C$ and containing $(\alphabar,e^{\alphabar})$. Since $\delta(\alphabar/C) = 0$ we have $\dim V = n$. By Lemma~\ref{lem:thumbtack}, $F_C$ is of the appropriate form to apply Proposition~\ref{prop:thumbtack}. Thus, replacing $\alphabar$ by $\alphabar/m$ for some $m \in \N^+$ if necessary, we may assume that $V$ is Kummer-generic over $F_C$. There is a matrix $M \in \Mat_{r\cross n}(\Q)$ and a tuple $\gammabar \in C^r$ such that $\abar = M \alphabar + \gammabar$. 

Now let $X'$ be a finite subset of $X$ such that $X \minus X_0 \subs X'$ and $\abar \in \ecl^B(X')$. Let $c_0$ be a finite tuple from $C$ such that $V$ is defined over the field $\Q(c_0)$, $X' \cup \gammabar \subs c_0$, and $c_0 \strong B$. 

Suppose $\qftp_L(\bbar/c_0) = \qftp_L(\abar/c_0)$. Then by Axiom~II there exists $\betabar\in B^n$ such that $\qftp_L(\bbar\betabar/c_0) = \qftp_L(\abar\alphabar)$. In particular, $\betabar$ is $\Q$-linearly independent over $c_0$, and $(\betabar,e^\betabar) \in V$, and $\bbar = M\betabar + \gammabar$.

Since $c_0 \strong B$ we have $\td(\betabar,e^\betabar/c_0,e^{c_0}) - \ldim_\Q(\betabar/c_0) \ge 0$, but $\ldim_\Q(\betabar/c_0) = n$, so $\td(\betabar,e^\betabar/c_0) \ge \td(\betabar,e^\betabar/c_0,e^{c_0}) \ge n$. Since $(\betabar,e^\betabar) \in V$ and $\dim V = n$ we have that $(\betabar,e^\betabar)$ is generic in $V$ over $c_0$. We must show that it is generic in $V$ over $C$, and since $C \strong B$ it is enough to show that $\betabar$ is $\Q$-linearly independent over $C$.

The relation ``$x \in \ecl(\ybar)$'' is given by an \Loo-formula. For any finite tuple $\zbar$, let $C_\zbar = \gen{ \bigcup_{i=1}^n \ecl^B(X_i \cap (X'\cup \zbar))}$. Then $C_\zbar$ is \Loo-definable with parameters $\zbar\cup c_0$, and $C = \bigcup_{\zbar \finsub X_0} C_\zbar$. 

There is an \Loo-formula $\theta(\xbar)$ with parameters $c_0$ expressing ``if $\zbar$ is $\ecl$-independent over $c_0$ then $\xbar$ is $\Q$-linearly independent over $C_\zbar$''. Now $B \models \theta(\alphabar)$ and, by Corollary~\ref{language}, $\alphabar$ and $\betabar$ satisfy the same $\Loo$-formulas with parameters from $c_0$. Hence $B \models \theta(\betabar)$. But then $\betabar$ is \Q-linearly independent over $C$, and hence $(\betabar,e^\betabar)$ is generic in $V$ over $C$. Since $V$ is Kummer-generic over $F_C$, it follows that $F_{C\cup \alphabar}$ and $F_{C \cup \betabar}$ are isomorphic as partial exponential field extensions of $F_C$. Hence by Lemma~\ref{pE-subfields}, $\qftp_L(\bbar/C) = \qftp_L(\abar/C)$, as required.
\end{proof}

\begin{remark}
This proof repairs two holes in the original proof. There was a mistake in the original proof of the ``thumbtack lemma'' in \cite{Zilber06covers}, which was corrected in \cite{BZ11} but only by strengthening the hypotheses to insist that the field $C$ is the union of an independent system in the sense of Lemma~\ref{lem:thumbtack}. We have been unable to prove Lemmas~\ref{field indep} and~\ref{lem:thumbtack} except in the case where the base of the system is of infinite exponential transcendence degree.

Key to our proof was the observation that this assumption does not affect the proof of categoricity. Furthermore, it ensures that all our models are $\Loo$-equivalent, which we use in the proof of Proposition~\ref{excellence} in going from $\bbar$ being linearly independent over $c_0$ to being linearly independent over $C$. This argument was missing from the original proof.
 
The reduction to this ``infinite-dimensional'' case, or more generally to an $\aleph_0$-categorical \Loo-sentence, is one which Shelah made in his work on excellent classes. While in his general setting that reduction is always possible, what was surprising to us is that it appears to be a significant reduction, in that we are still unable to prove the quasiminimal excellence axiom without it.
\end{remark}

Both authors would like to thank the Max Planck Institute for Mathematics, Bonn, where much of this work was completed.

\newcommand{\etalchar}[1]{$^{#1}$}


\end{document}